\numberwithin{equation}{section}
\newtheorem{theorem}{Theorem}[section]
\newtheorem{corollary}[theorem]{Corollary}
\newtheorem{proposition}[theorem]{Proposition}
\theoremstyle{definition}
\newtheorem{remark}[theorem]{Remark}
\numberwithin{equation}{section}
\numberwithin{equation}{section}
\newcommand{\BR}{\mathbb{R}}
\newcommand\tU{\widetilde{U}}
\newcommand\tV{\widetilde{V}}
\newcommand{\pd}{\partial}
\begin{document}

\title[Speed of traveling waves for diffusive Lotka-Volterra competition system]
{Propagation speed of traveling waves for diffusive Lotka-Volterra system with strong competition}

\author[K.-I. Nakamura]{Ken-Ichi Nakamura}
\address{Meiji Institute for Advanced Study of Mathematical Sciences, Meiji University, Tokyo, Japan}
\email{kenichi\_nakamura@meiji.ac.jp}

\author[T. Ogiwara]{Toshiko Ogiwara}
\address{Department of Mathematics, Josai University, Saitama, Japan}
\email{toshiko@josai.ac.jp}

\medskip

\thanks{Date: \today. Corresponding Author: Ken-Ichi Nakamura}

\thanks{This work was supported by JSPS KAKENHI Grant Number JP21K03368, JP21KK0044, 
JP22K03418 and JP25K07124. The first author was partially supported by MEXT Promotion of Distinctive Joint Usage/Research Center Support Program Grant Number JPMXP0724020292.}

\thanks{{\em 2020 Mathematics Subject Classification.} Primary: 35C07, 92D25; Secondary: 35K57.}

\thanks{{\em Key words and phrases:} diffusive Lotka-Volterra competition systems, strong competition, 
bistable traveling waves, propagation speed.}


\begin{abstract}
We study the propagation speed of bistable traveling waves in the classical 
two-component diffusive Lotka-Volterra system under strong competition. 
From an ecological perspective, the sign of the propagation speed determines 
the long-term outcome of competition between two species and thus plays a central role in predicting the success or failure of invasion of an alien species into 
habitats occupied by a native species. Using comparison arguments, we establish 
sufficient conditions determining the sign of the propagation speed, which refine  
previously known results. In particular, we show that in the symmetric case, 
where the two species differ only in their diffusion rates, 
the faster diffuser prevails over a substantially broader parameter range 
than previously established. Moreover, we demonstrate that when the interspecific 
competition coefficients differ significantly, the outcome of competition 
cannot be reversed by adjusting diffusion or growth rates. These findings 
provide a rigorous theoretical framework for analyzing invasion dynamics, 
offering sharper mathematical criteria for invasion success or failure.
\end{abstract}

\maketitle

\section{Introduction}\label{sec1}

In ecology, a central research theme is to understand whether invasive alien species can successfully invade into habitats already occupied by native species (see, for example, \cite{SK97,LPP16}). The diffusive Lotka-Volterra competition system
\begin{equation}\label{LV2}
\begin{cases}
U_t=U_{xx}+U(1-U-k_1V), \quad & x\in\BR, \ t>0, \\
V_t=dV_{xx}+rV(1-k_2U-V), & x\in\BR, \ t>0, 
\end{cases}
\end{equation}
is a classical model frequently employed to describe the spatio-temporal dynamics of such invasions. This system characterizes the time evolution of the population densities of two dispersing species 
competing for the same resource. Here $U(x,t)$ and $V(x,t)$ denote the normalized 
population densities of the species at location $x$ and time $t$, with carrying 
capacities normalized to $1$. The parameters are positive 
constants: $d$ represents the ratio of diffusion coefficients, 
$r$ the ratio of net growth rates, and $k_1, k_2$ the interspecific competition coefficients. 

Throughout this paper, we assume the strong competition condition (or bistable 
condition)
\begin{equation}\label{SC}
k_1>1, \enskip k_2>1, 
\end{equation}
which indicates that interspecific competition is stronger than intraspecific competition for both species. Under this assumption, the system admits two stable 
constant equilibria $(0,1)$ and $(1,0)$, as well as unstable constant equilibria 
$(0,0)$ and  
\[ 
(U_*,V_*)=\left(\frac{k_1-1}{k_1k_2-1}, \frac{k_2-1}{k_1k_2-1}\right).
\]

The success or failure of invasion can be mathematically 
characterized by the existence and qualitative properties 
of bistable traveling wave solution $(U(x,t),V(x,t))=(\Phi(x+ct),\Psi(x+ct))$ 
of \eqref{LV2} 
connecting two stable equilibria $(1,0)$ and $(0,1)$. 
Here $(\Phi(z),\Psi(z))$ and the propagation speed $c$ 
satisfy 
\begin{equation}\label{profileLV}
\begin{cases}
\Phi''-c\Phi'+\Phi(1-\Phi-k_1\Psi)=0, & z\in\BR, \\
d\Psi''-c\Psi'+r\Psi(1-k_2\Phi-\Psi)=0, & z\in\BR, \\
(\Phi(-\infty),\Psi(-\infty))=(0,1), \ 
(\Phi(\infty),\Psi(\infty))=(1,0).
\end{cases}
\end{equation}
The existence of bistable traveling waves for competition-diffusion systems 
including \eqref{LV2} has been studied in 
\cite{Ga82,CG84,Ka95}; see also \cite{VVV94,FZ15} for extensions to 
more general monotone systems. In particular, 
under the strong competition condition \eqref{SC}, system \eqref{LV2} admits 
a unique (up to translation) monotone traveling wave solution, which is stable 
in an appropriate functional setting. Moreover, the propagation speed $c=c(d,r,k_1,k_2)$ is uniquely determined by the system parameters (see \cite{Ga82,Ka95,KF96}).
Here the monotonicity of 
the traveling wave means that $\Phi'(z)>0>\Psi'(z)$ for all $z\in\BR$. 

From an ecological perspective, 
the sign of $c$ is a key factor that determines which species ultimately 
dominates: if $U$ represents the native 
species and $V$ the alien species, then $c<0$ implies successful 
invasion by $V$, whereas $c>0$ indicates that the invasion fails and $U$ 
ultimately prevails. 
From a mathematical perspective, determining the sign of $c$ provides the theoretical foundation for analyzing invasion dynamics and clarifying 
the role of spatial dispersal in shaping competitive outcomes.  
For instance, Carr\`ere \cite{Ca18} and Peng, Wu and Zhou \cite{PWZ21} 
demonstrated that the sign of $c$ crucially influences the asymptotic behavior 
of solutions of \eqref{LV2} under the strong competition condition \eqref{SC}, 
thereby providing rigorous justification for 
interpreting traveling waves as reliable predictors of invasion success or failure. 

Alternatively, the following two-component system can be employed as a model 
to describe the same phenomenon: 
\begin{equation}\label{LV1}
\begin{cases}
\tU_t=\tU_{xx}+\tU(1-\tU-\gamma \tV), \quad & x\in\BR, \ t>0, \\
\tV_t=d\tV_{xx}+\tV(\alpha-\beta \tU-\tV), & x\in\BR, \ t>0. 
\end{cases}
\end{equation}
The strong competition condition for \eqref{LV1} is given by 
\begin{equation}\label{SC1}
\dfrac{1}{\gamma}<\alpha<\beta.
\end{equation}
Problems \eqref{LV1} with \eqref{SC1} is equivalent to \eqref{LV2} with \eqref{SC} 
under the correspondence $(\tU,\tV)=(U,\alpha V)$ and $\alpha=r$, $\beta=rk_2$, 
$\gamma=k_1/r$. Consequently, 
$(\tU(x,t),\tV(x,t))=(\Phi(x+ct),\alpha\Psi(x+ct))$ is a unique traveling wave 
solution of \eqref{LV1}, where $(\Phi,\Psi)$ and $c$ are as in \eqref{profileLV}. 
The speed $c$ remains unchanged and is uniquely determined 
by the parameters $d, \alpha, \beta$ and $\gamma$. Kan-on \cite{Ka95} proved that 
for any $d>0$ and any $\beta, \gamma>0$ with $\beta\gamma>1$, there exists a 
unique value $\alpha_*=\alpha_*(d,\beta,\gamma)\in(1/\gamma, \beta)$ such that
$c(d,\alpha_*,\beta,\gamma)=0$. He further established the following monotonic 
dependence of $c$ on the parameters $\alpha, \beta, \gamma$: 
\begin{equation}\label{monotone1}
\dfrac{\pd c}{\pd \alpha}<0, \enskip \dfrac{\pd c}{\pd \beta}>0, \enskip 
\dfrac{\pd c}{\pd \gamma}<0
\end{equation}
for $d>0$ and $\alpha, \beta, \gamma$ satisfying \eqref{SC1}. Therefore, for any 
$d>0$ and for any $\alpha,\beta,\gamma$ satisfying \eqref{SC1}, we obtain 
\[
c\lesseqqgtr0 \ \Longleftrightarrow \ \alpha\gtreqqless \alpha_*(d,\beta,\gamma).
\]
However, determining the exact value of $\alpha_*$ for given $d, \beta,\gamma$ is 
generally difficult, except in special cases where additional parameter relations 
hold, as in \cite{RM2001}. 

Concerning the propagation speed $c=c(d,r,k_1,k_2)$ for the traveling wave 
of \eqref{LV2}, it follows from \eqref{monotone1} that 
\begin{equation}\label{monotone2}
\dfrac{\pd c}{\pd k_1}<0<\dfrac{\pd c}{\pd k_2}. 
\end{equation}
In contrast, the monotone dependence of $c$ on the parameters $d$ and 
$r$ remains unknown. 
Recently, Xiao \cite[Theorem 1.1]{Xi2025} proved that for any $d, r>0$ and $k_2>1$, 
there exist constants $k_->k_+>1$ such that $c<0$ whenever $k_1\geq k_-$, 
and $c>0$ whenever $1<k_1<k_+$. Consequently, there exists a threshold value 
$k_*=k_*(d,r,k_2)\in(k_+,k_-)$ satisfying 
\[
c\lesseqqgtr0 \ \Longleftrightarrow \ k_1\gtreqqless k_*(d,\beta,\gamma).
\]
However, the proof of the existence of $k_\pm$ relies on limiting arguments as $
k_1\to\infty$ and $k_1\to1+$, and no quantitative estimate of these values is 
provided in \cite{Xi2025}. There are also several studies on the 
propagation speed $c$ (see, \cite{GL2013,GN2015,MHO2019,MZYO20,CCW2023,MNO2023}), 
nevertheless identifying explicit parameter conditions that determine the sign 
of $c$ remains a challenging mathematical problem.

Building on these observations, the aim of the present paper is to 
significantly refine the parameter ranges for which the sign of the propagation 
speed $c$ can be determined. Addressing this problem is crucial for linking ecological interpretation with rigorous mathematical results, and it constitutes 
the main focus of the present study. Our approach relies on the construction of 
time-independent supersolutions for blocking wave propagation, which enables us 
to derive explicit conditions 
ensuring $c<0$. As a consequence, we obtain sharp criteria for invasion success 
and substantially extend the parameter regimes in which the sign of $c$ is fully 
characterized. Particular emphasis is placed on the symmetric nonlinearity case 
($r=1$ and $k_1=k_2$), where our results considerably improve previously known 
results, including those summarized in the review by Girardin \cite{Gi2019}. 

The paper is organized as follows: In Section \ref{S:super}, 
we transform \eqref{LV2} into a cooperative system and construct a time-independent 
supersolution. This supersolution blocks the leftward propagation of traveling 
waves, thereby showing that the propagation speed $c$ is nonpositive. Based on this 
construction, we derive in Section \ref{S:sign} sufficient conditions on the 
parameters $(d, r, k_1, k_2)$ for $c$ to be negative (Theorem \ref{thm:negative1}).
By exchanging the roles of the two species, we also obtain conditions ensuring 
positive speed. 

Section \ref{S:symmetric} is devoted to the symmetric case $(r=1, k_1=k_2=:k>1)$, where the two species differ only in their diffusion rates. As in \cite{Gi2019}, 
numerical evidence suggests that the faster diffuser always prevails 
(i.e., $c<0$ if $d>1$ and $k>1$), while rigorous results have 
so far been established only for a limited range of parameters. By applying 
Theorem \ref{thm:negative1}, we considerably enlarge the parameter region $(d, k)$ 
for which the speed $c$ is proved to be negative (Theorem \ref{thm:negative-sym}). 

Section \ref{S:degenerate} establishes sufficient conditions for determining the sign of $c$ when the diffusion ratio $d$ is small. Section \ref{S:determinacy} 
shows that when the interspecific competition coefficients $k_1$ and $k_2$ differ 
greatly, the sign of $c$ remains unchanged for all $d, r>0$ 
(Theorem \ref{thm:determinacy}). This indicates that 
if the competitive strengths of the two species are highly asymmetric, the 
outcome of invasion cannot be altered by adjusting $d$ and $r$. 
From an ecological perspective, this is a particularly significant and intriguing 
finding. 

\section{Construction of time-independent supersolutions}\label{S:super}
By the transformation $(u,v)=(U,1-V)$, the system \eqref{LV2}
can be rewritten as the following cooperative system: 
\begin{equation}\label{Co1}
\begin{cases}
u_t=u_{xx}+f(u,v), \quad & x\in\BR, \ t>0, \\
v_t=dv_{xx}+rg(u,v), & x\in\BR, \ t>0, 
\end{cases}
\end{equation}
where 
\begin{equation}\label{def:fg}
f(u,v):=u(1-u-k_1(1-v)), \quad g(u,v):=(1-v)(k_2u-v).
\end{equation}
The system \eqref{Co1} possesses two stable 
constant equilibria $(0,0), (1,1)$, together with two unstable equilibria $(0,1)$, 
$(u_*, v_*)$, where 
$$u_*=\frac{k_1-1}{k_1k_2-1}, \enskip v_*=\frac{k_2(k_1-1)}{k_1k_2-1}.$$ 
Since 
\[
\frac{\pd f}{\pd v}\geq 0, \ \frac{\pd g}{\pd u}\geq 0 \quad \mbox{in} \ R:=\{(u,v) \mid u\geq0, \ v\leq 1\},
\]
the comparison theorem is valid for supersolutions and subsolutions of \eqref{Co1} lying in $R$. 

The unique traveling wave (up to translation) of \eqref{Co1} 
connecting $(0,0)$ and 
$(1,1)$ is given by $(\phi(x+ct), \psi(x+ct))$ with $\phi=\Phi$, $\psi=1-\Psi$ 
and with the same propagation speed $c$, where $(\Phi,\Psi)$ and $c$ are as in \eqref{profileLV}. Equivalently, $(\phi(z), \psi(z))$ 
and $c$ satisfy
\begin{equation}\label{profileCoop}
\begin{cases}
\phi''-c\phi'+f(\phi,\psi)=0, & z\in\BR, \\
d\psi''-c\psi'+rg(\phi,\psi)=0, & z\in\BR, \\
(\phi(-\infty),\psi(-\infty))=(0,0), & (\phi(\infty),\psi(\infty))=(1,1).
\end{cases}
\end{equation}

In this section, we will construct a time-independent supersolution 
$(\phi_+(x),\psi_+(x))$ of \eqref{Co1} satisfying  
$(\phi_+(-\infty),\psi_+(-\infty))=(0,0)$ and 
$(\phi_+(\infty),\psi_+(\infty))=(1,1)$ by employing a variant of sigmoidal 
functions. 
This supersolution blocks the leftward propagation of the traveling wave 
$(\phi(x+ct),\psi(x+ct))$, and consequently, we conclude that the 
propagation speed $c$ is nonpositive. 

For $p>1$, let $h_p\in C^1(\BR)$ be defined by 
\begin{equation}\label{def:h}
h_p(s):=
\begin{cases}
s(1-s)(s^{p-1}-\alpha_p), \quad & s\geq0, \\
-\alpha_p s, & s<0,
\end{cases}
\end{equation}
where 
\[
\alpha_p:=\frac{6}{(p+1)(p+2)}\in(0,1).
\]
Then $h_p$ is of bistable type with three zeroes $0, \alpha_p^{1/(p-1)}, 1$ and has 
the balanced property
\begin{equation}\label{balance}
\int_0^1 h_p(s)ds=0.
\end{equation}
It is known (see, for example, \cite{K62} and \cite{FM77}) that \eqref{balance} guarantees the existence of 
a strictly monotone increasing function $\sigma=\sigma_p(x)$ solving 
\begin{equation}\label{eq:sigmoid}
\begin{cases}
\sigma''+h_p(\sigma)=0, \quad x\in\BR, \\
\sigma(-\infty)=0, \ \sigma(\infty)=1. 
\end{cases}
\end{equation}
Note that for $p=2$, $h_2(s)=s(1-s)(s-1/2)$ and thus 
$\sigma_2$ is a sigmoidal function given by $\sigma_2(x)=(1+e^{-x/\sqrt{2}})^{-1}$.

\begin{proposition}\label{prop:supersol}
Set $(\phi_+(x),\psi_+(x))=(\sigma_p(ax)^p,\sigma_p(ax))$ for $a>0$. Then, $(\phi_+,\psi_+)$ is a time-independent 
supersolution of \eqref{Co1} if all the following conditions hold: 
\begin{itemize}
\item[\rm (a)] $a^2<\dfrac{(p+1)(p+2)}{6p^2}(k_1-1)$;
\item[\rm (b)] Either $p\leq k_1$ or $\left(p>k_1 \mbox{ and } \ a^2\geq \dfrac{(p+1)(p+2)(p-k_1)}{p(p-1)(p+4)}\right)$;
\item[\rm (c)] $p<2k_1$ and $a^2\leq \dfrac{2k_1-p}{2p}$;
\item[\rm (d)] $\dfrac{r(k_2-1)}{d}\dfrac{(p+1)(p+2)}{(p-1)(p+4)}\leq a^2\leq \dfrac{r(p+1)(p+2)}{6d}$.
\end{itemize}
\end{proposition}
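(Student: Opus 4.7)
The plan is to substitute $(\phi_+,\psi_+)=(\sigma^p,\sigma)$ with $\sigma(x)=\sigma_p(ax)$ into both supersolution inequalities and reduce each to a pointwise inequality in the single scalar variable $\sigma\in[0,1]$. From the ODE $\sigma_p''+h_p(\sigma_p)=0$ one gets $\sigma''(x)=-a^2h_p(\sigma)$, and multiplying by $\sigma_p'$ and integrating (using the balance \eqref{balance}) yields the first integral $[\sigma'(x)]^2=-2a^2H_p(\sigma)$. With these, both $\phi_+''$ and $\psi_+''$ become explicit polynomials in $\sigma$ and $\sigma^{p-1}$.

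For the $\psi_+$-equation, a direct computation gives
\[
d\psi_+''+rg(\phi_+,\psi_+)=\sigma(1-\sigma)\bigl[(rk_2-da^2)\sigma^{p-1}+(da^2\alpha_p-r)\bigr].
\]
Since $\sigma(1-\sigma)\ge 0$, the bracket is an affine function of $\sigma^{p-1}\in[0,1]$, so it is nonpositive on $[0,1]$ iff it is nonpositive at both endpoints $\sigma^{p-1}=0,1$. Those two endpoint conditions are precisely the upper and lower bounds for $a^2$ in (d).

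For the $\phi_+$-equation, after dividing by $\sigma^2>0$, the inequality $\phi_+''+f(\phi_+,\psi_+)\le 0$ becomes $F(\sigma)\le 0$ on $[0,1]$ where
\[
F(\sigma)=A_0+A_1\sigma+B_1\sigma^{p-1}+B_2\sigma^p,
\]
with coefficients $A_0,A_1,B_1,B_2$ explicit in $a,p,k_1$ (e.g.\ $A_0=p^2a^2\alpha_p-(k_1-1)$, $B_1=-p(3p-1)a^2/(p+1)$). A cancellation using $\alpha_p=6/[(p+1)(p+2)]$ shows $F(1)=0$ identically, hence $A_0=-(A_1+B_1+B_2)$ and
\[
F(\sigma)=-(1-\sigma)\bigl[A_1+B_1L_{p-1}(\sigma)+B_2L_p(\sigma)\bigr],\qquad L_q(\sigma):=\frac{1-\sigma^q}{1-\sigma}.
\]
Denoting the bracket by $G(\sigma)$, condition (a) is exactly $G(0)\ge 0$ and condition (b) is exactly $G(1)=F'(1)\ge 0$.

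The hard part is promoting these two endpoint conditions to $G\ge 0$ on all of $[0,1]$. Using the identity $L_p(\sigma)=L_{p-1}(\sigma)+\sigma^{p-1}$, I rewrite
\[
G(\sigma)=A_1+(B_1+B_2)L_{p-1}(\sigma)+B_2\sigma^{p-1};
\]
since $B_1+B_2<0$ always (a short computation) and $L_{p-1}$ is monotone on $[0,1]$, the only term that can produce an interior dip is $B_2\sigma^{p-1}$, and only when $B_2>0$. Condition (c) is designed precisely to control this case: a direct check shows that $a^2\le(2k_1-p)/(2p)$ forces $A_1\ge 0$, and bounds the competition between $B_2\sigma^{p-1}$ and $(B_1+B_2)L_{p-1}(\sigma)$ so that the minimum of $G$ on $[0,1]$ is attained at an endpoint. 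Combined with $G(0),G(1)\ge 0$ from (a) and (b), this yields $F\le 0$ on $[0,1]$. The principal technical obstacle is implementing this monotonicity/case-split rigorously, covering both $p\ge 2$ (where $L_{p-1}$ is nondecreasing) and $1<p<2$ (where $L_{p-1}$ is nonincreasing), so that no matter the sign of $B_2$ one of the endpoint bounds dominates.
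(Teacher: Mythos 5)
Your reduction is sound and coincides with the paper's up to a change of variable: the computation of the $\psi_+$-inequality and its two endpoint conditions giving (d) is exactly the paper's argument, and for the $\phi_+$-inequality your $F(\sigma)=A_0+A_1\sigma+B_1\sigma^{p-1}+B_2\sigma^p$ (the divisor should be $\sigma^p$, not $\sigma^2$, but the coefficients are right) and the identifications $G(0)\geq0\Leftrightarrow$ (a), $G(1)\geq0\Leftrightarrow$ (b) are correct. The gap is the step you yourself flag as ``the principal technical obstacle'': you never prove $G\geq0$ on $(0,1)$, and the mechanism you propose --- that (c) forces the minimum of $G$ over $[0,1]$ to be attained at an endpoint --- is false in admissible parameter regimes. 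Take $p=3$, $k_1=10$, $a^2=1$, $k_2=1.01$, $d=r=1$: conditions (a)--(d) all hold, and $G(\sigma)=A_1+(B_1+B_2)(1+\sigma)+B_2\sigma^2$ with $A_1=7.9$, $B_1=-6$, $B_2=4.4$, i.e. $G(\sigma)=6.3-1.6\sigma+4.4\sigma^2$. This has a strict interior minimum at $\sigma^*=2/11$ with $G(\sigma^*)\approx6.15<G(0)=6.3<G(1)=9.1$. So an endpoint check alone cannot close the argument, and no case split on the sign of $B_2$ or the monotonicity of $L_{p-1}$ will rescue it: whenever $(B_1+B_2)L_{p-1}$ and $B_2\sigma^{p-1}$ are monotone in opposite directions, $G$ can genuinely dip below both endpoint values.

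What actually works --- and what the paper does --- is to prove the inequality for a \emph{weighted} version of $G$. Setting $\tau=\sigma^{-1}>1$, the paper studies $I_1(\tau)=A\frac{\tau^p-1}{\tau-1}+B\frac{\tau^{p-1}-1}{\tau-1}+C$, which equals $-\tau^{p-1}G(1/\tau)$ in your notation; the extra factor $\tau^{p-1}$ is essential. One then shows $I_1$ is decreasing on $(1,\infty)$ with $I_1(1+)=pA+(p-1)B+C\leq0$ by a second round of the same trick: $I_2(\tau):=(\tau-1)^2I_1'(\tau)$ satisfies $I_2(1+)=0$ and $I_2'(\tau)=(p-1)\tau^{p-3}(\tau-1)\{pA\tau+(p-2)B\}<0$, where negativity uses precisely $A<0$ (condition (a)) and $pA+(p-2)B\leq0$ (condition (c)). In particular (c) enters not by forcing an endpoint minimum of $G$ but by controlling the sign of $I_2'$. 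You would need to replace your endpoint-domination claim by this (or an equivalent) two-step monotonicity argument for the proof to go through.
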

\begin{proof} Assuming the conditions (a)-(d), we will show that the functions 
\begin{equation}\label{def:IJ}
I(x):=\phi_+''(x)+f(\phi_+(x),\psi_+(x)), \enskip J(x):=\dfrac{d}{r}\psi_+''(x)+g(\phi_+(x),\psi_+(x))
\end{equation}
are both nonpositive for $x\in\BR$. First we note that by \eqref{def:h} and \eqref{eq:sigmoid}, $\sigma=\sigma_p$ satisfies
\begin{align}
\sigma''&= -h_p(\sigma)=\alpha_p\sigma(1-\sigma)-\sigma^p(1-\sigma), \\
\left(\sigma'\right)^2&=-2\int_0^\sigma h_p(s)ds=\alpha_p\sigma^2-\dfrac{2}{3}\alpha_p\sigma^3-\frac{2}{p+1}\sigma^{p+1}+\frac{2}{p+2}\sigma^{p+2}.
\end{align}
Hence we have $J(x)=s(1-s)J_1(s)$, where $s=\sigma_p(ax)\in(0,1)$ and 
\begin{equation}
J_1(s)=\frac{d}{r}a^2\alpha_p-1+\left(k_2-\frac{d}{r}a^2\right)s^{p-1}.
\end{equation}
Since $J_1$ is monotone in $s\in(0,1)$, $J_1\leq\max\{J_1(0),J_1(1)\}\leq0$. Here the last inequality follows from (d). 

Similarly, direct 
calculation yields $I(x)=s^p(A+Bs+Cs^{p-1}+Ds^p)$, where $s=\sigma_p(ax)$ and \begin{alignat}{2}
A&=\frac{6p^2}{(p+1)(p+2)}a^2-(k_1-1), & \quad B&=-\frac{2p(2p+1)}{(p+1)(p+2)}a^2+k_1, \\
C&=-\frac{p(3p-1)}{p+1}a^2, & D&=\frac{3p^2}{p+2}a^2-1.
\end{alignat}
Then, $A+B+C+D=0$ and hence $I=\tau^{-2p}(\tau-1)I_1(\tau)$, where 
$\tau=s^{-1}>1$ and 
\[
I_1(\tau)=A\dfrac{\tau^p-1}{\tau-1}+B\dfrac{\tau^{p-1}-1}{\tau-1}+C.
\] 
Now we show that $I_1(\tau)<0$ for $\tau>1$ if (a), (b) and 
(c) are satisfied. We remark that the conditions (a), (b), (c) are equivalent to 
\[
\mbox{(A)} \enskip A<0, \quad \mbox{(B)} \enskip pA+(p-1)B+C\leq0, 
\quad \mbox{(C)} \enskip pA+(p-2)B\leq0,
\]
respectively. By (A) and (B), we have $I_1(1+)=pA+(p-1)B+C\leq0$ and 
$I_1(\infty)=-\infty$. Set 
\[
I_2(\tau):=(\tau-1)^2I_1'(\tau)
=(p-1)A\tau^p+\{-pA+(p-2)B\}\tau^{p-1}-(p-1)B\tau^{p-2}+A+B.
\]
Then, the conditions (A) and (C) imply that $I_2(1+)=0$, $I_2(\infty)=-\infty$ and 
\[
I_2'(\tau)=(p-1)\tau^{p-3}(\tau-1)\{pA\tau+(p-2)B\}< 0,  
\]
for $\tau>1$. 
Thus we obtain $I_1'(\tau)<0$ for $\tau>1$ and hence $I_1(\tau)<I_1(1+)\leq0$. The proposition is proved. 
\end{proof}

\begin{corollary}\label{cor:negative}
If there exist constants $p>1$ and $a>0$ satisfying conditions 
{\rm (a)-(d)} in Proposition {\rm \ref{prop:supersol}}, then $c\leq0$. 
\end{corollary}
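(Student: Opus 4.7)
The plan is to argue by contradiction using the stationary supersolution $(\phi_+,\psi_+)=(\sigma_p(ax)^p,\sigma_p(ax))$ from Proposition~\ref{prop:supersol}, combined with the stability of the bistable traveling wave. Note first that $(\phi_+,\psi_+)$ takes values in $R=\{u\geq0,\ v\leq1\}$, is strictly increasing in $x$ (since $\sigma_p$ is), and satisfies $(\phi_+(-\infty),\psi_+(-\infty))=(0,0)$ and $(\phi_+(\infty),\psi_+(\infty))=(1,1)$, so it is admissible initial data for \eqref{Co1}.

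Suppose, for contradiction, that $c>0$. Consider the solution $(u(x,t),v(x,t))$ of the Cauchy problem for \eqref{Co1} with initial data $(\phi_+,\psi_+)$. Since $(\phi_+,\psi_+)$ is a stationary supersolution lying in $R$, the comparison principle for the cooperative system \eqref{Co1} (valid in $R$ by the sign conditions on $\pd_v f,\pd_u g$ noted after \eqref{def:fg}) yields
\[
u(x,t)\leq \phi_+(x), \qquad v(x,t)\leq \psi_+(x), \qquad \forall\,x\in\BR,\ t>0.
\]
On the other hand, the classical stability of the bistable traveling wave of \eqref{LV2} established in \cite{Ga82,Ka95,KF96}, transferred to \eqref{Co1} via $(u,v)=(U,1-V)$, ensures that $(u,v)$---starting from monotone data connecting $(0,0)$ and $(1,1)$---converges, uniformly in $x$ as $t\to\infty$, to a translate $(\phi(\cdot+ct+x_1),\psi(\cdot+ct+x_1))$ of the traveling wave for some $x_1\in\BR$. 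If $c>0$, then for any fixed $x$, $x+ct+x_1\to+\infty$ and hence $(\phi(x+ct+x_1),\psi(x+ct+x_1))\to(1,1)$. Combining with the comparison bound above forces $\phi_+(x)\geq 1$ for every $x\in\BR$, contradicting $\phi_+(-\infty)=0$. Therefore $c\leq 0$.

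The main potential obstacle is verifying that the initial data $(\phi_+,\psi_+)$ meets the hypotheses of the stability theorem, in particular appropriate monotonicity and asymptotic behavior at $\pm\infty$; these all follow from the defining ODE \eqref{eq:sigmoid} of $\sigma_p$ and its standard exponential decay toward $0$ and $1$. A more elementary alternative would be to translate the traveling wave by a shift $x_0$ so that $(\phi(\cdot+x_0),\psi(\cdot+x_0))\leq(\phi_+,\psi_+)$ pointwise, then apply parabolic comparison directly and let $t\to\infty$; however, this requires comparing the exponential tails of $\phi$ and $\phi_+$ near $+\infty$, which is more delicate than appealing to the established stability result, so the approach above is preferable.
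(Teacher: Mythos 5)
Your argument is correct in outline but takes a genuinely different route from the paper's. You evolve the supersolution itself under \eqref{Co1}, trap the solution below the stationary $(\phi_+,\psi_+)$ by comparison, and then invoke convergence of the solution to a translate of the traveling wave to force $\phi_+\geq1$ when $c>0$. The paper instead avoids any convergence theorem: it takes the explicit traveling \emph{subsolution} of Lemma A2 in \cite{OM99}, of the form $\phi(x+ct-\delta(1-e^{-\nu t}))-\sigma\delta\rho_1(x+ct)e^{-\nu t}$ (and similarly for the second component), places it at $t=0$ below a translate $(\phi_+(\cdot+x_0),\psi_+(\cdot+x_0))$, and lets the comparison principle alone drive the contradiction as the subsolution tends to $1$ pointwise when $c>0$. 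The trade-off: the paper's proof needs only the comparison principle plus that one subsolution lemma, while yours imports the global asymptotic stability (with phase shift) of the bistable wave for front-like initial data. That global statement is true for this system, but the references you cite (\cite{Ga82,Ka95,KF96}) deliver existence, uniqueness and essentially local stability; the convergence result you actually need for data that is merely monotone and connects $(0,0)$ to $(1,1)$ is of Fife--McLeod type for monotone semiflows and should be attributed to something like \cite{FZ15}. Finally, the ``elementary alternative'' you mention and set aside---translating the wave below $(\phi_+,\psi_+)$ and comparing directly---is in substance what the paper does; the tail-matching difficulty you correctly identify is exactly what the perturbation terms $-\sigma\delta\rho_i(x+ct)e^{-\nu t}$ in the \cite{OM99} subsolution are designed to absorb, since $\delta$ may be taken arbitrarily small and $x_0$ arbitrarily large.
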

\begin{proof} 
We only outline the proof since it relies on a standard comparison argument. 

Let $(\phi(x+ct),\psi(x+ct))$ be a traveling wave of \eqref{Co1}. 
By Lemma A2 in \cite{OM99} (see also \cite{XC97,GNOW19,WO21}), 
one can construct a subsolution $(u_-(x,t),v_-(x,t))$ of \eqref{Co1} of the form 
\begin{align*}
u_-(x,t)&=\phi(x+ct-\delta(1-e^{-\nu t}))-\sigma\delta\rho_1(x+ct)e^{-\nu t}, \\
v_-(x,t)&=\psi(x+ct-\delta(1-e^{-\nu t}))-\sigma\delta\rho_2(x+ct)e^{-\nu t},
\end{align*}
where $\rho_1, \rho_2$ are smooth positive bounded functions on $\BR$, and 
$\delta, \nu, \sigma$ are positive constants. Furthermore, the constant $\delta$ 
can be chosen arbitrarily small. 

 Now suppose $c>0$. Let $(\phi_+(x),\psi_+(x))=(\sigma_p(ax)^p,\sigma_p(ax))$ be the 
time-independent supersolution of \eqref{Co1} obtained in Proposition \ref{prop:supersol}. 
We can then take a sufficiently large $x_0\in\BR$ and a sufficiently small 
$\delta>0$ such that  
\[
\phi_+(x+x_0)\geq \max\{u_-(x,0),0\}, \quad \psi_+(x+x_0)\geq \max\{v_-(x,0),0\}
\]
for $x\in\BR$. By the comparison theorem, it follows that 
\[
\phi_+(x+x_0)\geq \max\{u_-(x,t),0\}, \quad \psi_+(x+x_0)\geq \max\{v_-(x,t),0\}
\]
for all $x\in\BR$ and $t\geq0$. Since $c>0$, the right-hand sides converge to $1$ as 
$t\to\infty$, whereas the left-hand sides remain strictly less than $1$ for all 
$x\in\BR$. 
This contradiction completes the proof. 
\end{proof}

\begin{remark} By the uniqueness (up to translation) of the bistable traveling wave for \eqref{LV2} (or \eqref{Co1}), the speed $c=c(d,r,k_1,k_2)$ satisfies
\begin{equation}\label{formula}
c(d,r,k_1,k_2)=-\sqrt{dr}\ c(1/d,1/r,k_2,k_1).
\end{equation}
See \cite[Section 6]{MHO2019} for details. In view of this formula, one also obtains 
sufficient conditions for $c\geq0$ by applying the correspondence 
$(d,r,k_1,k_2)\mapsto(1/d,1/r,k_2,k_1)$ to the conditions (a)-(d).  
\end{remark}
\section{Determining the sign of the propagation speed of bistable traveling waves}\label{S:sign}
In this section, we determine the sign of the propagation speed $c$ for the 
bistable traveling wave $(\phi(x+ct),\psi(x+ct))$ 
using Proposition \ref{prop:supersol} and Corollary \ref{cor:negative}. 

For $k\geq1$, we define 
\begin{equation}\label{def:m}
m(k):=\dfrac{\sqrt{24k+1}-3}{2}.
\end{equation}
Note that $m(1)=1$, $m(2)=2$ and that
\begin{equation}\label{property:m}
m(k)
\begin{cases}
>k \quad & \mbox{if} \enskip 1<k<2, \\
<k & \mbox{if} \enskip k>2.
\end{cases}
\end{equation}

\begin{theorem}\label{thm:negative1}
The speed $c$ is negative if either of the following conditions holds:
\makeatletter\tagsleft@true\makeatother
\begin{align}
k_1\geq m(k_2), \quad &
\dfrac{d}{r}>
\begin{cases}
\dfrac{6k_1^{\,2}}{(k_1-1)^2(k_1+4)}(k_2-1) & (k_1<2), \\[7pt]
\dfrac{4}{k_1-1}(k_2-1) & (k_2\leq 2\leq k_1), \\[7pt]
\dfrac{2k_2m(k_2)}{2k_1-m(k_2)} & (k_2>2), 
\end{cases}\label{neg1}\tag{N1} \\
1<k_1<m(k_2), \quad & 
\dfrac{m(k_2)(k_2-1)}{m(k_2)-k_1}>\dfrac{d}{r}>
\begin{cases}
\dfrac{m(k_2)^2}{k_1-1} & (k_2\leq2), \\[7pt]
\dfrac{2k_2m(k_2)}{2k_1-m(k_2)} & (k_2>2).
\end{cases}
\label{neg2}\tag{N2}
\end{align}
\end{theorem}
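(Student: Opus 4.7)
Both (N1) and (N2) will be established by exhibiting, for parameters in the stated region, a pair $(p,a)$ with $p>1$ and $a>0$ satisfying conditions (a)--(d) of Proposition \ref{prop:supersol}, and then invoking Corollary \ref{cor:negative}. The arithmetic key is the pair of identities
\[
(m(k_2)+1)(m(k_2)+2)=6k_2, \qquad (m(k_2)-1)(m(k_2)+4)=6(k_2-1),
\]
which follow directly from the definition of $m$ and which make $p=m(k_2)$ the natural threshold in (d); indeed, the (d)-interval is nondegenerate precisely when $p\ge m(k_2)$, collapsing to the single value $a^2=rk_2/d$ at $p=m(k_2)$.

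For (N1) I would split into the three subcases that appear in the piecewise bound for $d/r$. When $k_1<2$ (which forces $k_2<2$), I take $p:=k_1$: then (b) is trivial, and setting $a^2$ equal to the lower endpoint of the (d)-interval reduces (a) to $d/r>6k_1^{\,2}(k_2-1)/[(k_1-1)^2(k_1+4)]$ while (c) is automatically weaker. When $k_2\le 2\le k_1$, I take $p:=2$, so (d) becomes $2r(k_2-1)/d\le a^2\le 2r/d$ and (a) becomes $a^2<(k_1-1)/2$; compatibility of these two yields the bound $d/r>4(k_2-1)/(k_1-1)$. When $k_2>2$ I take $p:=m(k_2)$, so the (d)-interval collapses to $a^2=rk_2/d$; condition (c) then specializes to the stated bound, and the first identity together with $k_1\ge m(k_2)$ shows that (a) is implied, via the inequality $2k_1(k_2-m)+m^2-2k_2\ge 0$.

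The more delicate case is (N2), where $1<k_1<m(k_2)$ forces the nontrivial branch of (b). I again take $p:=m(k_2)$ and $a^2:=rk_2/d$, but now use the second branch of (b); after simplification via both identities it becomes exactly $d/r\le m(k_2)(k_2-1)/(m(k_2)-k_1)$, matching the upper bound in (N2). The lower bound on $d/r$ then comes from (a) or (c) depending on which is binding; a short computation using $m^2+3m+2=6k_2$ shows that (a) dominates for $k_2\le 2$ and (c) dominates for $k_2>2$, reproducing the piecewise structure of the stated lower bound.

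Finally, Corollary \ref{cor:negative} delivers only $c\le 0$. To upgrade this to the strict $c<0$ claimed in the theorem, I would use the monotonicity $\partial c/\partial k_1<0$ from \eqref{monotone2}: by choosing $(p,a)$ so that all four conditions hold strictly (perturbing $p$ slightly above $m(k_2)$ when the (d)-interval would otherwise collapse), the supersolution persists in an open neighborhood of the given parameters, so $c\le 0$ throughout that neighborhood, and strict monotonicity in $k_1$ rules out $c=0$ at an interior point. The main technical obstacle is the bookkeeping in (N2): one must simultaneously verify the nontrivial branch of (b), identify which of (a) or (c) is binding, and confirm that the threshold $k_2=2$ (equivalently $m(k_2)=2$) emerging from the algebra matches the piecewise statement, all using both identities for $m$.
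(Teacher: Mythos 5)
Your proposal is correct and follows essentially the same route as the paper: the same choices of $p$ in each regime ($p=k_1$ for $k_1<2$, $p=2$ for $k_2\le 2\le k_1$, $p=m(k_2)$ for $k_2>2$ and throughout (N2), with the (d)-interval collapsing to $a^2=rk_2/d$ at $p=m(k_2)$), verification of (a)--(d), and the upgrade from $c\le 0$ to $c<0$ via strict monotonicity of $c$ in $k_1$. The identities $(m+1)(m+2)=6k_2$ and $(m-1)(m+4)=6(k_2-1)$ that you make explicit are exactly what the paper uses implicitly, and your identification of which of (a) or (c) is binding in each branch matches the stated piecewise bounds.
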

\begin{proof} First we show that if we assume \eqref{neg1} or \eqref{neg2}, we can find 
$p>1$ and $a>0$ satisfying all the conditions (a)-(d) in Proposition \ref{prop:supersol}. 
In view of Proposition \ref{prop:supersol} (d), the condition $p\geq m(k_2)$ is required. 

In the case of \eqref{neg1}, we can take $p>1$ satisfying 
\begin{equation}\label{ineq:p}
m(k_2)\leq p\leq k_1
\end{equation}
and 
\begin{align}
\dfrac{r(k_2-1)}{d}\dfrac{(p+1)(p+2)}{(p-1)(p+4)}
&<\min\left\{\dfrac{(p+1)(p+2)}{6p^2}(k_1-1),\dfrac{2k_1-p}{2p}\right\} \label{ineq1}\\
&=
\begin{cases}
\dfrac{(p+1)(p+2)}{6p^2}(k_1-1) & (1<p\leq 2), \\[5pt]
\dfrac{2k_1-p}{2p} & (p\geq 2).
\end{cases}\notag
\end{align}
In fact, 
taking $p=k_1$ if $k_1<2$, $p=2$ if $m(k_2)\leq 2\leq k_1$ or $p=m(k_2)$ if $m(k_2)>2$, 
we see that \eqref{neg1} implies \eqref{ineq1}. 
Furthermore, by \eqref{ineq:p} and \eqref{ineq1}, we can find $a>0$ such that the conditions (a), (c), 
(d) and the former condition of (b) in Proposition \ref{prop:supersol} hold true. Therefore, Proposition 
\ref{prop:supersol} yields $c\leq0$.

In the case of \eqref{neg2}, we take $p=m(k_2)>k_1$. Then, the condition (d) holds for $a^2=k_2r/d$. Furthermore, \eqref{neg2} implies the condition (a) and the latter conditions of (b) and (c)
in Proposition \ref{prop:supersol}. The condition $p=m(k_2)<2k_1$ in (c) is also satisfied if the 
second condition of \eqref{neg2} holds 
(in other words, if the left-hand side is larger than the right-hand side in the condition). Hence 
we have $c\leq0$. 

Next we show that $c$ is negative if either \eqref{neg1} or \eqref{neg2} is satisfied. Since the speed 
$c=c(d,r,k_1,k_2)$ 
is strictly monotone decreasing in $k_1$, we easily see that $c<0$ except for the case where 
$k_1=m(k_2)$ in \eqref{neg1}. Let $d, r$ and $k_2$ be fixed and suppose that \eqref{neg1} is satisfied for $k_1=m(k_2)$. Then 
we can take sufficiently small $\varepsilon>0$ such that the condition \eqref{neg2} holds for 
$k_1=m(k_2)-\varepsilon$. Hence, the strict monotonicity of $c$ in $k_1$ shows that 
$c(d,r,m(k_2),k_2)<c(d,r,m(k_2)-\varepsilon,k_2)\leq0$.
\end{proof}

\begin{remark}
The choice of $p$ satisfying \eqref{ineq:p} and \eqref{ineq1} in the above proof is numerically 
optimal for minimizing the lower bound of $d/r$ in 
\eqref{neg1}. 
\end{remark}

\begin{corollary}\label{cor:xiao} Let $d,r>0$ and $k_2>1$ be fixed. 
\begin{itemize}
\item[\rm (i)] The speed $c$ is negative if 
\begin{equation}\label{neg3}
k_1>
\begin{cases}
\max\left\{2, 1+\dfrac{4r}{d}(k_2-1)\right\} & (1<k_2\leq2), \\[7pt]
m(k_2)\max\left\{1, \dfrac12+\dfrac{r}{d}k_2\right\} & (k_2>2).
\end{cases}
\end{equation}
\item[\rm (ii)] The speed $c$ is positive if 
\begin{equation}\label{pos1}
0<k_1-1<
\begin{cases}
\dfrac16(k_2-1)(k_2+4)\min\left\{1,\dfrac{r}{d}\dfrac{k_2-1}{k_2^{\,2}}\right\} & (1<k_2\leq2), \\[7pt]
(k_2-1)\min\left\{\dfrac{k_2+4}{6},\dfrac{r}{4d}\right\} & (k_2\geq2).
\end{cases}
\end{equation}
\end{itemize}
\end{corollary}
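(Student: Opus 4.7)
The plan is to derive both parts of Corollary~\ref{cor:xiao} as direct consequences of Theorem~\ref{thm:negative1}, with the reflection identity \eqref{formula} bridging Part~(i) and Part~(ii).

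For Part~(i), I would simply verify that the stated hypotheses place $(d, r, k_1, k_2)$ into the appropriate sub-case of condition \eqref{neg1}. Recall that $m$ is strictly increasing with $m(2)=2$. When $1<k_2\leq 2$, the hypothesis $k_1>2$ already guarantees $k_1\geq m(k_2)$ and selects the middle sub-case of \eqref{neg1}, in which the bound $d/r>4(k_2-1)/(k_1-1)$ is exactly the algebraic rearrangement of the stated $k_1>1+(4r/d)(k_2-1)$. When $k_2>2$, the hypothesis $k_1>m(k_2)$ forces $k_1>2$ (since $m(k_2)>m(2)=2$), placing us in the third sub-case of \eqref{neg1}; its bound $d/r>2k_2m(k_2)/(2k_1-m(k_2))$ rearranges to $k_1>m(k_2)\bigl(\tfrac12+rk_2/d\bigr)$.

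For Part~(ii), I would invoke \eqref{formula}, so that $c(d,r,k_1,k_2)>0$ is equivalent to $c(1/d,1/r,k_2,k_1)<0$, and then apply Theorem~\ref{thm:negative1} to the swapped parameter tuple $(\hat d,\hat r,\hat k_1,\hat k_2)=(1/d,1/r,k_2,k_1)$, noting $\hat d/\hat r=r/d$. The condition $\hat k_1\geq m(\hat k_2)$ of \eqref{neg1} becomes $k_2\geq m(k_1)$, which, by isolating the square root in $m(k)=(\sqrt{24k+1}-3)/2$ and squaring, is equivalent to $k_1-1\leq(k_2-1)(k_2+4)/6$; this supplies the factor $1$ (respectively $(k_2+4)/6$) inside the minimum in the first (respectively second) case of \eqref{pos1}. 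The remaining factor in each minimum comes from the $\hat d/\hat r$-bound of the sub-case of \eqref{neg1} activated after the swap: when $1<k_2\leq 2$ the first sub-case ($\hat k_1<2$) applies, yielding $r/d>6k_2^{\,2}(k_1-1)/((k_2-1)^2(k_2+4))$; when $k_2\geq 2$ the middle sub-case ($\hat k_2\leq 2\leq \hat k_1$, i.e., $k_1\leq 2\leq k_2$) applies, yielding $r/d>4(k_1-1)/(k_2-1)$. Both rearrange to the second factor in \eqref{pos1}.

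The principal non-bookkeeping step is the algebraic equivalence $k_2\geq m(k_1) \Longleftrightarrow 6(k_1-1)\leq(k_2-1)(k_2+4)$, which follows from the defining quadratic for $m$. Beyond that, the argument is a careful matching of each hypothesis to the correct sub-case of Theorem~\ref{thm:negative1} after reflection, and the one point requiring mild care is strictness: the passage from $c\leq 0$ (or $c\geq 0$) to $c<0$ (or $c>0$) is recovered, just as at the end of the proof of Theorem~\ref{thm:negative1}, by invoking the strict monotonicity of $c$ in $k_1$ (equivalently, after the swap, in $k_2$) together with a small perturbation at any boundary case.
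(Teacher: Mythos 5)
Your overall strategy is exactly the paper's: part (i) is read off directly from condition \eqref{neg1}, and part (ii) is obtained by applying Theorem \ref{thm:negative1} to the reflected parameters $(\hat d,\hat r,\hat k_1,\hat k_2)=(1/d,1/r,k_2,k_1)$ via \eqref{formula}. Your verification of part (i), of the equivalence $k_2\geq m(k_1)\Leftrightarrow 6(k_1-1)\leq (k_2-1)(k_2+4)$, and of the first case of part (ii) all check out (at $k_2=2$ the first and second sub-cases of \eqref{neg1} produce the same bound $4(\hat k_2-1)$, and \eqref{pos1} there forces $k_1<2$, so the boundary is harmless). No extra perturbation argument is needed for strictness, since Theorem \ref{thm:negative1} already concludes $c<0$ strictly.

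There is, however, a gap in your treatment of the second case of part (ii), namely the assertion that ``when $k_2\geq 2$ the middle sub-case ($\hat k_2\leq 2\leq\hat k_1$, i.e.\ $k_1\leq 2\leq k_2$) applies.'' The hypotheses of \eqref{pos1} do not force $k_1\leq 2$: for $k_2>2$ the right-hand side of \eqref{pos1} can exceed $1$. When $k_1>2$ one has $\hat k_2=k_1>2$, so the governing sub-case of \eqref{neg1} after reflection is the third one, whose bound $\hat d/\hat r>2\hat k_2 m(\hat k_2)/(2\hat k_1-m(\hat k_2))$ is not implied by the middle-sub-case bound $\hat d/\hat r>4(\hat k_2-1)/(\hat k_1-1)$ that you invoke. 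Concretely, take $k_1=5$, $k_2=10$, $r/d=2$: then $k_1-1=4<9\min\{7/3,\,1/2\}=4.5$, so \eqref{pos1} is satisfied, yet after reflection the third sub-case requires $r/d>2\cdot 5\cdot m(5)/(20-m(5))=40/16=2.5$, which fails (and \eqref{neg2} is also inapplicable since $\hat k_1=10>m(5)=4$). Thus Theorem \ref{thm:negative1} does not reach this point and your argument, as written, does not establish \eqref{pos1} there. To be fair, the paper's own proof of (ii) is equally terse---it uses the bound $k_2-1<\frac{d}{4r}(k_1-1)$ for all $k_1\geq 2$ without checking the case condition $k_2\leq 2$ of the middle sub-case---so the gap is inherited rather than introduced; closing it would require either restricting the second case of \eqref{pos1} to $k_1\leq 2$ or supplying a separate argument for $2<k_1<1+(k_2-1)\min\{(k_2+4)/6,\,r/(4d)\}$.
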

\begin{proof}
(i) Since the condition \eqref{neg3} implies \eqref{neg1}, $c$ is negative.  

(ii) Let $d,r>0$ and $k_1>1$ be fixed. Then, we see from \eqref{neg1} that $c$ is negative if 
\begin{align}
k_2&\leq \frac16(k_1+1)(k_1+2), \quad k_2-1<\frac{d}{r}\frac{(k_1-1)^2(k_1+4)}{6k_1^{\,2}} & &(1<k_1<2), \\
k_2&\leq \frac16(k_1+1)(k_1+2), \quad k_2-1<\dfrac{d}{4r}(k_1-1) & &(k_1\geq2).
\end{align}
The assertion follows from these conditions and the formula \eqref{formula}.\end{proof}

\begin{remark} As stated in the introduction, there exists a threshold 
$k_*=k_*(d,r,k_2)>1$ with the following property:
\begin{equation}
c\lesseqqgtr0 \quad \mbox{if} \enskip k_1\gtreqqless k_*. 
\end{equation}
Corollary \ref{cor:xiao} 
gives an upper bound and a lower bound of $k_*$. 
\end{remark}

\section{Symmetric nonlinearity case}
\label{S:symmetric}
In the special case where $r=1$ and $k_1=k_2=:k>1$, \eqref{LV2} reduces to 
\begin{equation}\label{LV2S}
\begin{cases}
U_t=U_{xx}+U(1-U-kV), \quad & x\in\BR, \ t>0, \\
V_t=dV_{xx}+V(1-V-kU), & x\in\BR, \ t>0. 
\end{cases}
\end{equation}
Here, the two species differ only in their diffusion rates, and thus this symmetric model reduces the invasion problem to determining whether the slower diffuser or 
the faster diffuser will ultimately prevail in the competition. 
In the review paper of Girardin \cite{Gi2019}, this problem --- referred to as 
the ``Unity is strength'' versus ``Disunity is strength'' dichotomy --- was treated and a global ``Disunity is strength''-type result (namely, 
$c<0$ for all $d>1$ and $k>1$) was numerically suggested. However, the problem is far from fully understood. Indeed, several sufficient 
conditions for negative propagation speed was summarized in \cite{Gi2019} 
as follows: 
\begin{itemize}
\item[(i)] Rodrigo and Mimura \cite{RM2001}: $(d,k)=(11/2,11/6)$. 
\item[(ii)] Guo and Lin \cite{GL2013}: $d=4$ and $5/4\leq k\leq 4/3$. 
\item[(iii)] Ma, Huang and Ou \cite{MHO2019}: $5/3<k<2$ and $4<d<4/(k-1)$, $d\not=2k/(k-1)$. 
\item[(iv)] Alzahrani, Davidson and Dodds \cite{ADD10}: $k>1$ and $d>\underline{d}(k)$ 
for sufficiently large $\underline{d}(k)>1$. 
\item[(v)] Girardin and Nadin \cite{GN2015}: $d>1$ and $k>\underline{k}(d)$ for sufficiently large 
$\underline{k}(d)>1$. 
\item[(vi)] Risler \cite{Ri2017}: $d=1+\delta d$ and $k=1+(\delta k)^2$ in the parameter regime $
0<\delta d\ll \delta k\ll 1$. 
\end{itemize}
Note that in the limiting cases (iv), (v) and (vi), no quantitative information 
is available for  $\underline{d}(k)$, $\underline{k}(d)$, $\delta d$ and $\delta k$. 

Recently, additional sufficient conditions for negative speed have been obtained:
\begin{itemize}
\item[(vii)] Chang, Chen and Wang \cite{CCW2023}: 
\begin{equation}
\max\left\{k-\dfrac{d(k-1)}{3k-1}, \dfrac{4d(k-1)}{(3k-1)^2}+
\left\lfloor\dfrac{2d(k+1)}{(3k-1)^2}-k\right\rfloor
\left\lfloor\dfrac{k(5-3k)}{2}\right\rfloor\right\}<1, 
\end{equation}
where $\lfloor\cdot\rfloor$ denotes the floor function. 
\item[(viii)] Morita, Nakamura and Ogiwara \cite{MNO2023}: $5/3<k<2$ and $4<d<2/(2-k)$.
\end{itemize}
Figure \ref{fig:sym1} (left) illustrates the above-mentioned regions 
of negative speed in the $(d,k)$-plane, excluding the limiting cases. 
See also \cite[Figure 2]{Gi2019}, which depicts the regions (i)-(vi). 
Thus, the sign of the propagation speed $c$ remains unknown for a wide range of 
parameter values $(d,k)$. 

By virtue of Theorem \ref{thm:negative1}, we obtain the following 
sufficient conditions for negative speed in \eqref{LV2S}: 

\begin{theorem}\label{thm:negative-sym}
The propagation speed $c$ of the bistable traveling wave for \eqref{LV2S} is negative if either of the following conditions holds: 
\makeatletter\tagsleft@true\makeatother
\begin{align}
& k\geq2, \enskip d>\dfrac{2km(k)}{2k-m(k)}, \label{negS1}\tag{S1}\smallskip \\
& 1<k<2, \enskip \dfrac{m(k)^2}{k-1}<d<\dfrac{m(k)(k-1)}{m(k)-k}, \label{negS2}\tag{S2}
\end{align}
where $m(k)=(\sqrt{24k+1}-3)/2$ as defined in \eqref{def:m}. 
\end{theorem}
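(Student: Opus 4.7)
The plan is to deduce Theorem \ref{thm:negative-sym} directly from Theorem \ref{thm:negative1} by specializing to $r=1$, $k_1=k_2=k$, using the characterization \eqref{property:m} to split into the two regimes $k\geq 2$ and $1<k<2$. The key observation is that Theorem \ref{thm:negative1} is built from two complementary conditions: \eqref{neg1} requires $k_1\geq m(k_2)$, while \eqref{neg2} requires $1<k_1<m(k_2)$. In the symmetric setting these become $k\geq m(k)$ and $1<k<m(k)$ respectively, which by \eqref{property:m} correspond exactly to $k\geq 2$ and $1<k<2$.

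First I would treat the case $k\geq 2$. Here $k_1=k_2=k\geq m(k)$, so \eqref{neg1} applies. Since $k_2\geq 2$, the relevant sub-case of the piecewise bound on $d/r$ is either the middle line (when $k=2$) or the bottom line (when $k>2$). At $k=2$ one has $m(2)=2$, so the third bound gives $\tfrac{2\cdot 2\cdot 2}{2\cdot 2-2}=4=\tfrac{4}{k-1}(k-1)$, which matches the middle bound and coincides with $\tfrac{2km(k)}{2k-m(k)}\big|_{k=2}$; for $k>2$ the third bound applies directly. Thus \eqref{neg1} collapses to the single inequality $d>\tfrac{2km(k)}{2k-m(k)}$, which is precisely \eqref{negS1}.

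Next I would handle the case $1<k<2$. Here $1<k<m(k)$, so \eqref{neg2} applies. Since $k_2=k\leq 2$, the first sub-case of the lower bound in \eqref{neg2} is the relevant one, so \eqref{neg2} reduces to
\begin{equation*}
\frac{m(k)^2}{k-1}<d<\frac{m(k)(k-1)}{m(k)-k},
\end{equation*}
which is exactly \eqref{negS2}. To be sure this is a meaningful statement I would quickly check that the interval is non-empty for $1<k<2$: using the defining identity $m(k)^2+3m(k)=6k-2$ (obtained by squaring $2m(k)+3=\sqrt{24k+1}$), the inequality $\tfrac{m(k)(k-1)}{m(k)-k}>\tfrac{m(k)^2}{k-1}$ rearranges to $(k+3)m(k)>-k^2+8k-3$, which holds with equality at $k=1$ and strict inequality on $(1,2]$ (an elementary monotonicity check).

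I do not expect any genuine obstacle: the argument is essentially a bookkeeping reduction of Theorem \ref{thm:negative1}. The only mildly delicate point is correctly identifying which branch of the piecewise bounds in \eqref{neg1}--\eqref{neg2} is active under the symmetric specialization and checking that the upper and lower bounds in \eqref{negS2} are compatible; both are finite verifications using only the algebraic definition of $m(k)$ and the dichotomy \eqref{property:m}.
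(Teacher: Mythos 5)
Your proposal is correct and follows essentially the same route as the paper: the paper's proof is simply the observation that, via the dichotomy \eqref{property:m}, condition \eqref{negS1} implies \eqref{neg1} and condition \eqref{negS2} implies \eqref{neg2}, so Theorem \ref{thm:negative1} applies. Your branch-by-branch verification of which piecewise bound is active, and the extra check that the interval in \eqref{negS2} is non-empty, are consistent with (and slightly more detailed than) what the paper records.
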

\begin{proof}
By \eqref{property:m}, the conditions \eqref{negS1} and \eqref{negS2} imply 
\eqref{neg1} and 
\eqref{neg2}, respectively. Hence, the assertion of the theorem follows from 
Theorem \ref{thm:negative1}.
\end{proof}
Note that the union of the regions \eqref{negS1} and \eqref{negS2} in the $(d,k)$-plane is unbounded 
in both $d$ and $k$. 
As shown in Figure \ref{fig:sym1} (right), our conditions cover a substantially 
larger parameter region in the $(d,k)$-plane than those previously established. 
This result considerably advances the understanding of the symmetric case 
\eqref{LV2S}, although a complete characterization of the propagation speed 
still remains an open problem. 

\begin{figure}[htbp]
\centering
\includegraphics[width=0.95\textwidth]{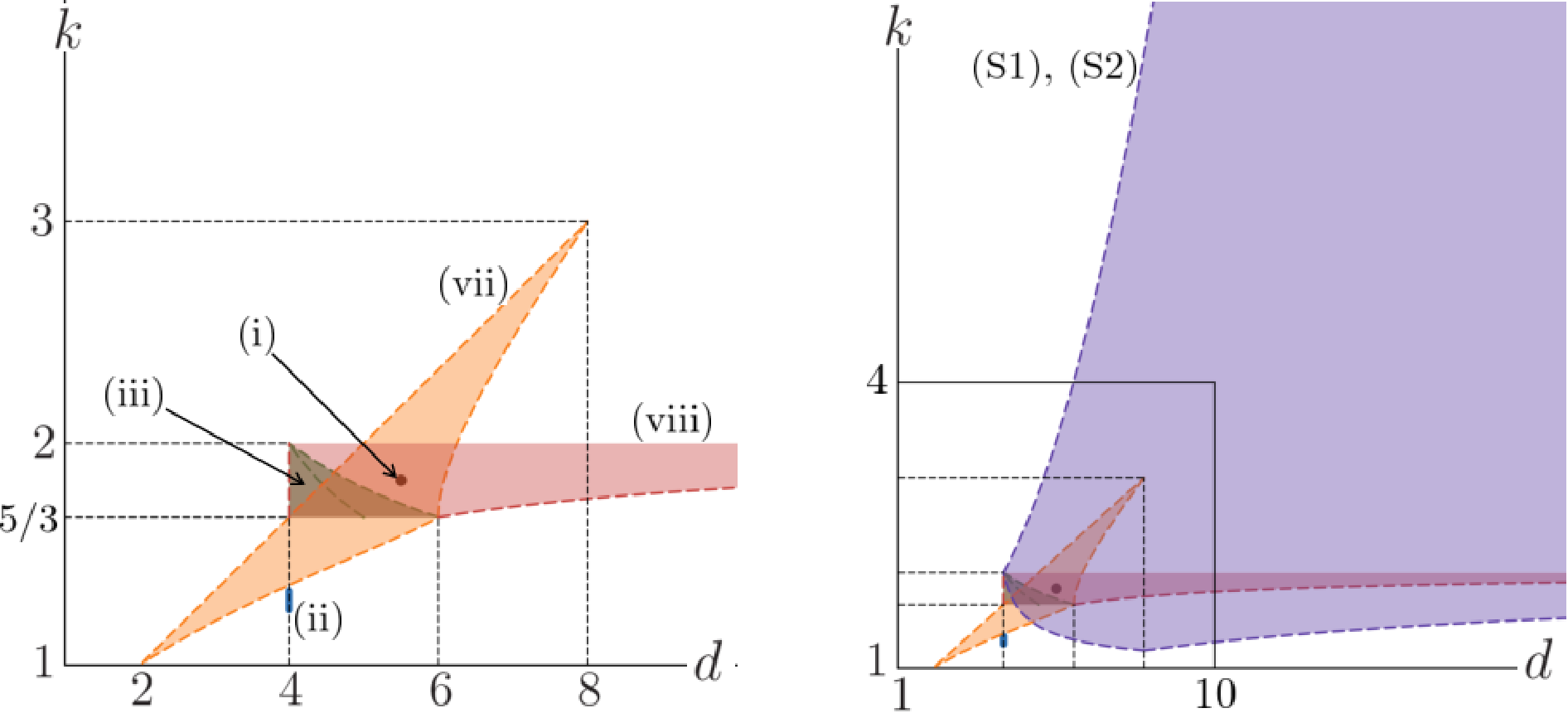} 
\caption{(Left) Parameter regions in the $(d,k)$-plane corresponding to 
negative speeds previously established. (Right) Additional regions (S1) and (S2) in Theorem \ref{thm:negative-sym} (highlighted in purple). 
The solid-line box $[1,10]\times[1,4]$ indicates the drawing area shown in the 
left figure.}
\label{fig:sym1}
\end{figure}
\section{Nearly degenerate case}\label{S:degenerate}
In this section, we derive sufficient conditions for negative propagation 
speed in the case where the diffusion ratio $d$ is small. 

According to the result of Alzahrani, Davidson and Dodds 
\cite[Theorem 23 and Remark 24]{ADD10}, for sufficiently small $d>0$, 
the propagation speed $c$ is negative if $k_1>k_2^{\,2}$, whereas $c$ is 
positive if $k_1<k_2^{\,2}$. 
  
For the sake of clarity, we briefly recall how the threshold value $k_1=k_2^{\,2}$ 
arises. 
When $d=0$, a standing wave (that is, a traveling wave with propagation speed 
$0$) $(\phi,\psi)$ of \eqref{Co1} satisfies 
\begin{equation}\label{Co2}
\begin{cases}
\phi''+f(\phi,\psi)=0, \quad & x\in\BR, \\
g(\phi,\psi)=0, & x\in\BR, \\
(\phi,\psi)(-\infty)=(0,0),& (\phi,\psi)(\infty)=(1,1), 
\end{cases}
\end{equation}
where $f$ and $g$ are given in \eqref{def:fg}. From the definition of $g$, 
we seek a solution satisfying
\begin{equation}\label{psi-degenerate}
\psi=
\begin{cases}
k_2\phi, & x<0, \\
1, & x\geq0,
\end{cases}
\end{equation} 
together with the continuity condition $\phi(0)=1/k_2$. 

For $x<0$, the first equation of \eqref{Co2} reduces to 
\begin{equation}
\phi''=-f(\phi,k_2\phi)=(k_1-1)\phi-(k_1k_2-1)\phi^2,
\end{equation}
which yields  
\begin{equation}\label{phi-degenerate}
(\phi')^2=(k_1-1)\phi^2-\frac23(k_1k_2-1)\phi^3,
\end{equation}
under the conditions $\phi(-\infty)=0$ and $\phi'(-\infty)=0$.

For $x>0$, solving $\phi''=-f(\phi,1)=-\phi(1-\phi)$ subject to the conditions 
$\phi(\infty)=1$ and $\phi'(\infty)=0$, we obtain 
\begin{equation}\label{phi+degenerate}
(\phi')^2=-\phi^2+\frac23\phi^3+\frac13.
\end{equation}
By imposing the $C^1$-matching condition  
for $\phi$ at $x=0$, we have  
\begin{equation}
(k_1-1)k_2^{-2}-\frac23(k_1k_2-1)k_2^{-3}=-k_2^{-2}+\frac23 k_2^{-3}+\frac13,
\end{equation}
which yields $k_1=k_2^{\,2}$. Thus, \eqref{Co2} admits a standing wave 
precisely when $k_1=k_2^{\,2}$. 

Motivated by this observation, we construct a time-independent supersolution $(\phi_+,\psi_+)$ of \eqref{Co1} 
satisfying
\begin{equation}\label{upsi-small-d}
\psi_+=
\begin{cases}
k_2\phi_++\delta, & x<0, \\
1, & x\geq0,
\end{cases}
\end{equation} 
for small $d$, where $\delta\in(0,1)$ is a constant to be specified later. 
To ensure the continuity of $\psi_+$ at $x=0$, we impose the condition 
\begin{equation}\label{uphi0}
\phi_+(0)=\frac{1-\delta}{k_2}.
\end{equation}

Let $I(x)$ and $J(x)$ be the functions defined in \eqref{def:IJ}. Then, 
$(\phi_+,\psi_+)$ is a time-independent supersolution of \eqref{Co1} 
if $I\leq 0$ and $J\leq 0$ for all $x\not=0$ and 
\begin{equation}\label{jump}
\phi_+'(0-)\geq \phi_+'(0+), \quad \psi_+'(0-)\geq \psi_+'(0+).
\end{equation}

For $x<0$, we consider 
the equation $I(x)=\phi_+''+f(\phi_+,k_2\phi_++\delta)=0$, namely 
\begin{equation}\label{eq:uphi-}
\phi_+''-\{k_1(1-\delta)-1\}\phi_++(k_1k_2-1)(\phi_+)^2=0, \quad x<0. 
\end{equation}
When $\delta<\delta_1:=1-1/k_1$, \eqref{eq:uphi-} has a solution of the form 
\begin{equation}\label{sol:uphi-}
\phi_+(x)=\beta\mu(x)(1-\mu(x)), 
\end{equation}
where 
\begin{equation}\label{def:mu}
 \mu(x)=\frac{1}{1+e^{-\gamma(x-\xi)}} \enskip (\xi\in\BR), \quad 
 \gamma=\sqrt{k_1(1-\delta)-1}, \quad \beta=\frac{6\gamma^2}{k_1k_2-1}. 
\end{equation}
Let $m_0=\phi_+(0)/\beta=\mu(0)(1-\mu(0))$. Then, $m_0\leq 1/4$, and 
by \eqref{uphi0} and \eqref{def:mu}, 
\begin{equation}\label{m0}
 m_0=\dfrac{1-\delta}{k_2\beta}=\frac{(1-\delta)(k_1k_2-1)}{6k_2\{k_1(1-\delta)-1\}}>\frac16.
\end{equation}
Therefore, if we assume 
\begin{equation}\label{cond_delta1}
k_1>3-\frac{2}{k_2}, \quad 0<\delta< \delta_2:=1-\frac{3k_2}{k_1k_2+2} \ (<\delta_1),
\end{equation}
we can take $\xi>0$ satisfying $m_0=\mu(0)(1-\mu(0))\in(1/6,1/4)$.  
For such $\xi$, the solution $\phi_+$ is strictly monotone increasing in $x<0$ with $\phi_+(-\infty)=0$ and 
$\phi_+(0)=m_0\beta$.

In view of \eqref{uphi0}, \eqref{eq:uphi-} and \eqref{def:mu}, we see that 
\begin{align}
J=\frac{d}{r}k_2\phi_+''+g(\phi_+,k_2\phi_++\delta)=\dfrac{d}{r}\frac{6k_2\gamma^2}\beta\phi_+\left(\frac{\beta}{6}-\phi_+\right)-\delta k_2(m_0\beta-\phi_+). 
\end{align}
Since $J\leq0$ for $\beta/6\leq \phi_+\leq m_0\beta$, we only have to derive a condition $J\leq 0$ for $0<\phi_+<\beta/6$, or equivalently, 
\begin{equation}\label{cond:J<0}
\dfrac{d}{r}\leq H(\phi_+):=\frac{\beta \delta}{6\gamma^2}\frac{m_0\beta-\phi_+}{\phi_+\left(\beta/6-\phi_+\right)}, \quad 0<\phi_+<\frac{\beta}{6}.
\end{equation}
Since $H$ attains its minimum at $\phi_+=m_*\beta$, where 
\begin{equation}
m_*:=m_0-\sqrt{m_0\left(m_0-1/6\right)}<\frac16, 
\end{equation}
we obtain the following condition for $J\leq0$ in the case $x<0$: 
\begin{equation}\label{cond:J<0_2}
\dfrac{d}{r}\leq H_*:=H(m_*\beta)=\dfrac{\delta(m_0-m_*)}{\gamma^2m_*(1-6m_*)}.
\end{equation}

Next, for $x>0$, we consider the equation $I(x)=\phi_+''+f(\phi_+,1)=0$, namely, 
\begin{equation}\label{eq:uphi+}
\phi_+''+\phi_+(1-\phi_+)=0, \quad x>0.
\end{equation}
This has a solution of the form
\begin{equation}\label{sol:uphi+}
\phi_+(x)=1-6\lambda(x)(1-\lambda(x)), 
\end{equation}
where $\lambda(x)=\left(1+e^{-(x-\eta)}\right)^{-1}$ $(\eta\in\BR)$. Then we can take $\eta<0$ such that 
the solution $\phi_+$ satisfies \eqref{uphi0} and is strictly monotone increasing in $x>0$ with 
$\phi_+(0)=m_0\beta$ and $\phi_+(\infty)=1$. On the other 
hand, $J(x)=0$ for all $x>0$ since $\psi_+\equiv1$.  

Finally, we will derive conditions for \eqref{jump}. We consider the former inequality since the latter obviously holds from \eqref{upsi-small-d}. 
By \eqref{eq:uphi-} and \eqref{eq:uphi+},  
\begin{align}
\phi_+'(0-)^2&=\{k_1(1-\delta)-1\}\phi_+(0)^2-\frac23(k_1k_2-1)\phi_+(0)^3, \\
\phi_+'(0+)^2&=-\phi_+(0)^2+\frac23 \phi_+(0)^3+\frac13.
\end{align}
Combining these with \eqref{uphi0}, we see that the inequality $\phi_+'(0-)\geq\phi_+'(0+)$ holds if 
\begin{equation}\label{cond_delta2}
k_1>k_2^{\,2}, \quad 0<\delta\leq \delta_3:=1-\left(k_1^{-1}k_2^{\,2}\right)^{1/3}.
\end{equation}
Since $k_2^{\,2}>3-2/k_2$ and $\delta_3<\delta_2$ for $k_1, k_2>1$, we conclude that $(\phi_+,\psi_+)$ 
defined by \eqref{sol:uphi-}, \eqref{sol:uphi+} and \eqref{upsi-small-d} becomes a time-independent 
supersolution of \eqref{Co1} if the conditions \eqref{cond_delta2} and \eqref{cond:J<0_2} are satisfied. 

Summarizing the above arguments, we obtain a sufficient condition 
for negative speed for nearly degenerate case: 
\begin{theorem}\label{thm:degenerate}
Suppose that $k_1>k_2^{\,2}$ and that
\begin{equation}\label{cond:small-d}
\dfrac{d}{r}<\frac{1-k_1^{-1/3}k_2^{2/3}}{\kappa(\kappa-1)(\kappa+1)^2}
\left(\sqrt{\kappa^2+\kappa+1}+1\right)^2, 
\end{equation}
where $\kappa:=(k_1k_2)^{1/3}>1$. Then the speed $c$ is negative. 
\end{theorem}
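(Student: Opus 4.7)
The plan is to use the piecewise time-independent supersolution $(\phi_+,\psi_+)$ constructed above in this section, with the free parameter $\delta$ chosen at its extreme admissible value $\delta_3 := 1 - k_1^{-1/3}k_2^{2/3}$. The hypothesis $k_1 > k_2^{\,2}$ ensures $\delta_3 \in (0,1)$, and as noted immediately after \eqref{cond_delta2} the conditions in \eqref{cond_delta1} follow from $k_2^{\,2} > 3-2/k_2$ and $\delta_3 < \delta_2$. At $\delta = \delta_3$ the inequality in \eqref{cond_delta2} becomes an equality, so the jump condition $\phi_+'(0-) \geq \phi_+'(0+)$ holds (in fact with equality), while the remaining condition \eqref{cond:J<0_2} reduces to $d/r \leq H_*(\delta_3)$.

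The main computational step is to evaluate $H_*(\delta_3)$ in closed form and check that it equals the right-hand side of \eqref{cond:small-d}. Setting $\kappa := (k_1k_2)^{1/3}$ and using $1-\delta_3 = k_1^{-1/3}k_2^{2/3}$ in \eqref{def:mu} gives immediately $\gamma^2 = \kappa^2 - 1$. The factorizations $k_1k_2-1 = \kappa^3-1 = (\kappa-1)(\kappa^2+\kappa+1)$ and $k_1^{-1/3}k_2^{2/3}/k_2 = 1/\kappa$ then simplify \eqref{m0} to $m_0 = (\kappa^2+\kappa+1)/[6\kappa(\kappa+1)]$, so $m_0 - 1/6 = 1/[6\kappa(\kappa+1)]$. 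Consequently $m_0 - m_* = \sqrt{m_0(m_0-1/6)} = \sqrt{\kappa^2+\kappa+1}/[6\kappa(\kappa+1)]$ and $1 - 6m_* = (\sqrt{\kappa^2+\kappa+1} - 1)/[\kappa(\kappa+1)]$. The conjugate identity $(\sqrt{\kappa^2+\kappa+1}-1)(\sqrt{\kappa^2+\kappa+1}+1) = \kappa(\kappa+1)$ collapses the ratio $(m_0-m_*)/[m_*(1-6m_*)]$ to $(\sqrt{\kappa^2+\kappa+1}+1)^2/[\kappa(\kappa+1)]$, and multiplying by $\delta_3/\gamma^2 = \delta_3/[(\kappa-1)(\kappa+1)]$ yields exactly the right-hand side of \eqref{cond:small-d}.

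Once the supersolution is available, I conclude by a standard comparison argument as in the proof of Corollary \ref{cor:negative}: if $c>0$, then the subsolution $(u_-,v_-)$ constructed via Lemma A2 of \cite{OM99} would converge pointwise to $(1,1)$ as $t\to\infty$, yet it would be trapped from above by the time-independent supersolution $(\phi_+(\cdot+x_0),\psi_+(\cdot+x_0))$, whose values lie strictly below $(1,1)$. This contradiction yields $c \leq 0$. To upgrade to $c<0$, I use the strict monotonicity $\partial c/\partial k_1 < 0$ from \eqref{monotone2}: the right-hand side of \eqref{cond:small-d} depends continuously on $k_1$, and the hypothesis is a strict inequality, so one can choose $k_1' \in (k_2^{\,2}, k_1)$ for which \eqref{cond:small-d} still holds at $(d,r,k_1',k_2)$; the previous step applied at $k_1'$ gives $c(d,r,k_1',k_2) \leq 0$, and the monotonicity then yields $c(d,r,k_1,k_2) < c(d,r,k_1',k_2) \leq 0$.

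The main obstacle I anticipate is the second-paragraph algebra: although each individual manipulation is elementary, it is quite striking that the combination of $m_0$, $m_*$, $\gamma^2$, and $\delta_3$ collapses so cleanly, and the conjugate identity for $\sqrt{\kappa^2+\kappa+1}\pm 1$ must be applied at exactly the right moment in order for everything to line up with the closed-form expression in \eqref{cond:small-d}.
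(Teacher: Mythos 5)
Your proposal is correct and follows essentially the same route as the paper: take the piecewise supersolution of Section \ref{S:degenerate} with $\delta=\delta_3$, verify that $H_*(\delta_3)$ collapses to the right-hand side of \eqref{cond:small-d} via $\gamma^2=\kappa^2-1$, $m_0=(\kappa^2+\kappa+1)/[6\kappa(\kappa+1)]$ and the conjugate identity, and then upgrade $c\leq 0$ to $c<0$ by the strict monotonicity of $c$ in $k_1$. Your algebra checks out and matches the paper's stated values; the only difference is that you spell out the final perturbation of $k_1$ explicitly, which the paper leaves implicit.
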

\begin{proof} First we note that 
\begin{equation}
H_*=\dfrac{\delta(m_0-m_*)}{\gamma^2m_*(1-6m_*)}
=\frac{6\delta}{k_1(1-\delta)-1}\left(\sqrt{m_0}+\sqrt{m_0-1/6}\right)^2.
\end{equation}
By \eqref{m0}, $m_0$ is monotone increasing in $\delta>0$ and hence so is $H_*$. 
Taking $\delta=\delta_3$ and letting $\kappa=(k_1k_2)^{1/3}>1$, we obtain
\begin{equation}
m_0=\dfrac{\kappa^2+\kappa+1}{6\kappa(\kappa+1)}
\end{equation}
and 
\begin{equation}
H_*=\frac{1-k_1^{-1/3}k_2^{2/3}}{\kappa(\kappa-1)(\kappa+1)^2}
\left(\sqrt{\kappa^2+\kappa+1}+1\right)^2.
\end{equation}
Therefore, by \eqref{cond:J<0_2} and the strictly monotone dependence of $c$ 
in $k_1$, 
the propagation speed $c$ is negative if \eqref{cond:small-d} holds.
\end{proof}

\begin{remark}\label{rem:ADD}
In \cite{ADD12}, Alzahrani, Davidson and Dodds numerically computed the curve 
in the $(k_1,d)$-plane, for fixed $r$ and $k_2$, on which the propagation speed 
$c$ is $0$. The curve clearly passes through the point $(k_2,r)$ and has 
the limiting 
points $(k_2^{\,2},0)$ and $(\sqrt{k_2},\infty)$ (\cite{ADD10}). 
They also conjectured that the curve is 
monotone, with $c<0$ to the right of the curve and $c>0$ to the left. Figure 
\ref{fig:ADD} (left) provides a schematic representation of their observations, 
shown on a double-logarithmic scale adapted from  
\cite[Figure 6]{ADD12} (see also \cite[Figure 3]{Gi2019}). 

Theorem \ref{thm:degenerate} together with Theorem \ref{thm:negative1} 
rigorously establishes a substantial portion of the negative-speed region suggested 
numerically; see Figure \ref{fig:ADD} (right). This provides a theoretical 
support for their conjecture, although its complete proof still remains open. 
 
\end{remark}

\begin{figure}[htbp]
\centering
\includegraphics[width=0.95\linewidth]{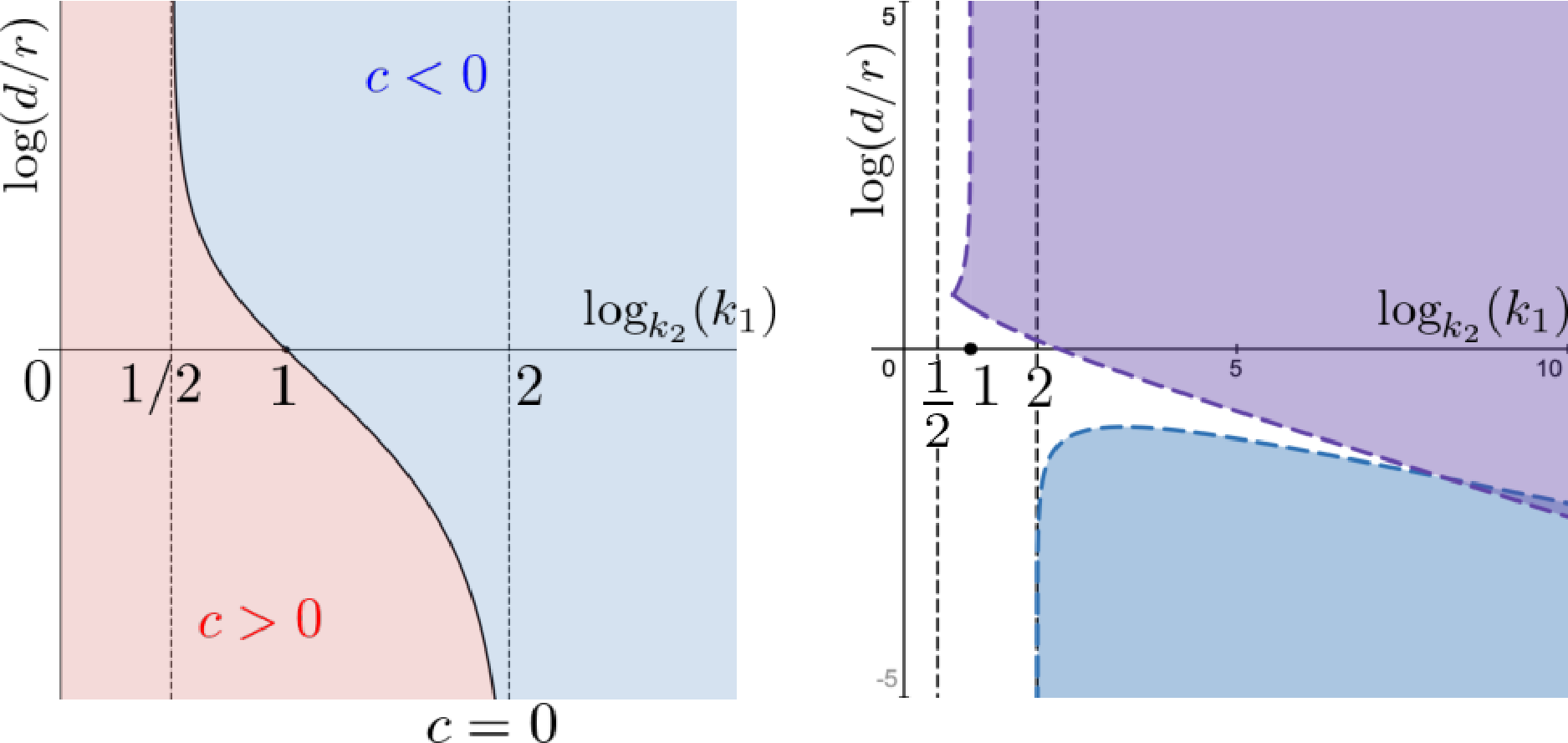} 
\caption{(Left) Schematic representation of the regions of negative speed (blue), positive speed (red), and zero speed (solid curve) numerically suggested in \cite{ADD12}.  (Right) Regions of negative speed for $r=1$ and $k_2=2$ 
established by Theorem \ref{thm:negative1} (purple) and Theorem 
\ref{thm:degenerate} (blue).}
\label{fig:ADD}
\end{figure}

\section{Determinacy of the speed sign under strongly asymmetric competition}\label{S:determinacy}
Combining Theorem \ref{thm:degenerate} with Theorem \ref{thm:negative1}, we 
establish the following result on the sign of the propagation speed, 
showing that when the interspecific competition coefficients differ significantly, the competitive outcome remains unaffected by adjusting diffusion rates 
or growth rates.
\begin{theorem}\label{thm:determinacy}
For any fixed $k_2>1$, there exist $k_1^*$ and $k_1^{**}$ with $1<k_1^*<k_1^{**}$ such that the speed $c$ is negative for all $d,r>0$ if $k_1\geq k_1^{**}$, while $c$ is positive for all $d,r>0$ if $1<k_1\leq k_1^*$. 
\end{theorem}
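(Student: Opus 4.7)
The plan is to combine Theorem \ref{thm:negative1}(N1), which yields $c<0$ when $d/r$ is large, with Theorem \ref{thm:degenerate}, which yields $c<0$ when $d/r$ is small; together they will cover every $d/r>0$ once $k_1$ is taken large enough relative to $k_2$. The positive statement will then be deduced from the negative one via the symmetry formula \eqref{formula}.

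For the negative part, fix $k_2>1$ and write $L_1(k_1,k_2)$ for the lower bound on $d/r$ in Theorem \ref{thm:negative1}(N1) and $L_2(k_1,k_2)$ for the upper bound on $d/r$ in Theorem \ref{thm:degenerate} (the latter being available once $k_1>k_2^{\,2}$). Since $c<0$ whenever $d/r>L_1$ or $d/r<L_2$, it suffices to show that $L_1(k_1,k_2)\leq L_2(k_1,k_2)$ for all sufficiently large $k_1$. A direct asymptotic computation gives $L_1(k_1,k_2)\sim C_1(k_2)/k_1$ as $k_1\to\infty$ (whichever of the relevant sub-cases of (N1) applies for the fixed $k_2$), whereas substituting $\kappa=(k_1k_2)^{1/3}$ into the formula of Theorem \ref{thm:degenerate} yields $L_2(k_1,k_2)\sim k_2^{-2/3}k_1^{-2/3}$. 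Hence $L_2/L_1$ behaves like $k_1^{1/3}$ and tends to infinity, so there exists a threshold $k_1^{**}=k_1^{**}(k_2)>1$ (also meeting the ancillary constraints $k_1\geq m(k_2)$ and $k_1>k_2^{\,2}$) with $L_1\leq L_2$ for every $k_1\geq k_1^{**}$, giving $c<0$ universally.

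For the positive part, the plan is to invoke the symmetry formula \eqref{formula}: $c(d,r,k_1,k_2)>0$ for all $d,r>0$ if and only if $c(D,R,k_2,k_1)<0$ for all $D,R>0$. Reapplying the first step to this transformed problem, with the roles of the two competition coefficients interchanged, yields $c(D,R,k_2,k_1)<0$ universally provided $k_2\geq k_1^{**}(k_1)$, where $k_1^{**}(\cdot)$ is the threshold from Step~1 now regarded as a function of the fixed second coefficient. It therefore suffices to show $k_1^{**}(K)\to 1^{+}$ as $K\to 1^{+}$. This follows directly from the construction: for any fixed $k_1>1$, as $K\to 1^{+}$ the (N1)-bound $L_1(k_1,K)$ tends to $0$ (in either of the two relevant sub-cases, $k_1<2$ or $K\leq 2\leq k_1$), whereas $\kappa=(k_1K)^{1/3}\to k_1^{1/3}>1$ keeps the Theorem \ref{thm:degenerate} bound $L_2(k_1,K)$ bounded away from $0$; the ancillary conditions $k_1\geq m(K)$ and $k_1>K^{\,2}$ are likewise met for $K$ close to $1$, since $m(K),K^{\,2}\to 1$. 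Hence for our fixed $k_2>1$ we may choose $k_1^{*}>1$ so close to $1$ that $k_1^{**}(k_1)\leq k_2$ for all $k_1\in(1,k_1^{*}]$, and the required universal positivity of $c$ follows. The strict inequality $k_1^{*}<k_1^{**}$ is automatic, since by Kan-on's strict monotonicity of $c$ in $k_1$ these two ranges cannot overlap.

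The principal obstacle I anticipate is the asymptotic bookkeeping of the bounds $L_1$ and $L_2$, since (N1) splits into three sub-cases and one must verify that each of them gives the claimed leading order $C_1(k_2)/k_1$ as $k_1\to\infty$, while also carefully tracking the ancillary constraints $k_1\geq m(k_2)$, $k_1>k_2^{\,2}$ (and their analogues in the transformed problem). An equivalent and perhaps cleaner route is to invoke Corollary \ref{cor:xiao}(i) in place of (N1), which packages the three sub-cases into a single inequality and still delivers the $1/k_1$ decay needed for the comparison with $L_2$.
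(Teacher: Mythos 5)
Your proposal is correct and follows essentially the same route as the paper: for large $k_1$ the \eqref{neg1} lower bound on $d/r$ decays like $k_1^{-1}$ while the upper bound from Theorem \ref{thm:degenerate} decays only like $k_1^{-2/3}$, so the two negative-speed regions eventually cover all $d,r>0$, and the positive part is obtained by the same comparison applied to the transformed problem via \eqref{formula}, where the lower bound vanishes as $k_1\to1^+$ while the upper bound stays bounded away from $0$. Your reformulation of the positive part through the threshold function $k_1^{**}(K)\to1^+$ is only a cosmetic repackaging of the paper's limit computation, and the monotonicity of $c$ in $k_1$ closes the argument exactly as in the paper.
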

\begin{proof} First we consider the negative speed case. Let $k_2>1$ be fixed, 
and suppose  
$k_1\geq2$ and $k_1>k_2^{\,2}$. Then, since $k_1>m(k_2)=(\sqrt{24k_2+1}-3)/2$, the condition \eqref{neg1} 
yields that $c<0$ if 
\begin{equation}
\frac{d}{r}>\max\left\{\dfrac{m(k_2)^2}{k_1-1},\dfrac{2k_2m(k_2)}{2k_1-m(k_2)}\right\}=O(k_1^{-1}) 
\quad (k_1\to\infty).
\end{equation}
On the other hand, in view of \eqref{cond:small-d}, we see that $c<0$ if 
\begin{equation}
\dfrac{d}{r}<\frac{1-k_1^{-1/3}k_2^{2/3}}{\kappa(\kappa-1)(\kappa+1)^2}
\left(\sqrt{\kappa^2+\kappa+1}+1\right)^2=O(k_1^{-2/3}) \quad (k_1\to\infty),  
\end{equation}
where $\kappa=(k_1k_2)^{1/3}$. Therefore, we can find $k_1^{**}>1$ such that $c(d,r,k_1^{**},k_2)<0$ for all $d,r>0$. Since $c$ is strictly monotone decreasing in $k_1$, we obtain the 
assertion for negative speed. 

Next we consider the positive speed case. Let $k_2>1$ be fixed, and suppose 
$1<k_1<2$ and $m(k_1)\leq k_2$. Then, \eqref{neg1} and \eqref{formula} yield that $c>0$ if 
\begin{equation}\label{cond:lower}
\frac{r}{d}
>\max\left\{\frac{6k_2^{\,2}}{(k_2-1)^2(k_2+4)},\frac{4}{k_2-1}\right\}(k_1-1).
\end{equation}
On the other hand, we use \eqref{cond:small-d} and \eqref{formula} to conclude that $c>0$ if 
\begin{equation}\label{cond:upper}
\dfrac{r}{d}<\frac{1-k_2^{-1/3}k_1^{2/3}}{\kappa(\kappa-1)(\kappa+1)^2}
\left(\sqrt{\kappa^2+\kappa+1}+1\right)^2, 
\end{equation}
where $\kappa=(k_1k_2)^{1/3}$. Since the right-hand side of \eqref{cond:lower} approaches $0$ as 
$k_1\to1$ and since that of \eqref{cond:upper} is bounded away from $0$ as $k_1\to1$, there exists 
$k_1^*>1$ such that $c(d,r,k_1^*,k_2)>0$ for all $d,r>0$. Hence the strict 
monotonicity of $c$ in $k_1$ proves the assertion for positive speed. 
\end{proof}

\begin{remark}
As stated in Remark \ref{rem:ADD}, numerical observations in \cite{ADD12} conjecture that $k_1^*=\sqrt{k_2}$ and $k_1^{**}=k_2^{\,2}$. 
However, a rigorous proof has not yet been established. 
\end{remark}




\begin{thebibliography}{99}

\bibitem{ADD10}
E.~O.~Alzahrani, F.~A.~Davidson and N.~Dodds, 
{\rm Travelling waves in near-degenerate bistable competition models}, 
Math.~Model.~Nat.~Phenom., \textbf{5} (2010) 13--35. 

\bibitem{ADD12}
E.~O.~Alzahrani, F.~A.~Davidson and N.~Dodds, 
{\rm Reversing invasion in bistable systems}, 
J.~Math.~Biol., \textbf{65} (2012) 1101--1124. 

\bibitem{AW} 
D.~G.~Aronson and H.~F.~Weinberger, 
{\em Nonlinear diffusion in population genetics, combustion, and nerve propagation}, 
Partial Differential Equations and Related Topics, Lecture Notes in Math., {\bf 446}, Springer, New York, 1975, 5--49. 

\bibitem{Ca18}
C.~Carr\`ere, 
{\em Spreading speeds for a two-species competition-diffusion system}, 
J.~Differ.~Equ., {\bf 264} (2018), 2133--2156. 

\bibitem{CCW2023}
M.-S.~Chang, C.-C.~Chen and S.-C.~Wang, 
{\em Propagating direction in the two species Lotka-Volterra competition diffusion system}, 
Discrete Contin.~Dyn.~Syst.~Ser.~B, \textbf{28} (2023), 5998--6014. 

\bibitem{XC97}
X.~Chen, 
{\em Existence, uniqueness, and asymptotic stability of traveling waves in nonlocal 
evo-lution equations}, 
Adv.~Diff.~Eqns., {\bf 2} (1997), 125--160. 

\bibitem{CG84}
C.~Conley and R.~A.~Gardner, 
{\em An application of the generalized Morse index to traveling wave solutions 
of a competitive reaction diffusion model}, 
Indiana Univ.~Math.~J., {\bf 33} (1984), 319--343.

\bibitem{FZ15}
J.~Fang and X.-Q.~Zhao, 
{\em Bistable traveling waves for monotone semiflows with applications}, 
J.~Eur.~Math.~Soc., \textbf{17} (2015), 2243--2288. 

\bibitem{FM77}
P.~C.~Fife and J.~B.~McLeod,
{\em The approach of solutions of nonlinear diffusion equations to travelling wave solutions}, Arch. Ration. Mech. Anal., {\bf 65} (1977), 335--361.

\bibitem{Ga82}
R.~A.~Gardner,
{\em Existence and stability of travelling wave solutions of competition models: a degree theoretic approach}, J. Differential Equations, \textbf{44} (1982), 343--364.

\bibitem{Gi2019}
L.~Girardin, 
{\em The effect of random dispersal on competitive exclusion -- A review}, 
Math.~Biosci., \textbf{318} (2019), 108271. 

\bibitem{GN2015}
L.~Girardin and G.~Nadin, 
{\em Travelling waves for diffusive and strongly competitive systems: relative motility and invasion speed}, Eur.~J.~Appl.~Math., \textbf{26} (2015), 521--534.

\bibitem{GL2013}
J.-S.~Guo and Y.-C.~Lin, 
{\em The sign of the wave speed for the Lotka-Volterra competition diffusion system}, 
Comm.~Pure Appl.~Anal., \textbf{12} (2013) 2083--2090. 

\bibitem{GNOW19}
J.-S.~Guo, K.-I.~Nakamura, T.~Ogiwara and C.-C.~Wu, 
{\em Stability and uniqueness of traveling waves for a discrete bistable 3-species 
competition system}, J. Math. Anal. Appl., {\bf 472} (2019), 1534--1550.

\bibitem{K62}
I.~Ya.~Kanel', 
{\em Stabilization of solutions of the Cauchy problem for equations encountered in combustion theory} (Russian), 
Mat.~Sb. (N.S.) \textbf{59}(101) (1962), suppl, 245--288.

\bibitem{Ka95}
Y.~Kan-on, 
{\em Parameter dependence of propagation speed of travelling waves for 
competition-diffusion equations}, 
SIAM J.~Math.~Anal., \textbf{26} (1995), 340--363. 

\bibitem{KF96}
Y.~Kan-on and Q.~Fang,
{\em Stability of monotone travelling waves for competition-diffusion equations}, Japan J. Indust. Appl. Math., \textbf{13} (1996), 343--349.

\bibitem{LPP16}
M.~A.~Lewis, S.~V.~Petrovskii and J.~R.~Potts, 
{\em The Mathematics Behind Biological Invasions}, Springer, 1997. 

\bibitem{MHO2019}
M.~Ma, Z.~Huang and C.~Ou, 
{\em Speed of the traveling wave for the bistable Lotka-Volterra competition model}, 
Nonlinearity, \textbf{32} (2019), 3143--3162. 

\bibitem{MZYO20}
M.~Ma, Q.~Zhang, J.~Yue and C.~Ou,
{\em Bistable wave speed of the Lotka-Volterra competition model},
J.~Biol.~Dynam., \textbf{14} (2020), 608--620.

\bibitem{MNO2023}
Y.~Morita, K.-I.~Nakamura and T.~Ogiwara, 
{\em Front propagation and blocking for the competition-diffusion system in a domain of half-lines with a junction}, 
Discrete Contin.~Dyn.~Syst.~Ser.~B, \textbf{28} (2023), 6345--6361. 

\bibitem{OM99}
T.~Ogiwara and H.~Matano,
{\em Monotonicity and convergence results in order-preserving systems in the presence of symmetry},
Discrete Contin. Dyn. Syst., \textbf{5} (1999), 1--34.

\bibitem{PWZ21}
R.~Peng, C.-H.~Wu and M.~Zhou, 
{\em Sharp estimates for the spreading speeds of the Lotka-Volterra diffusion 
system with strong competition}, 
Ann.~Inst.~H.~Poincar\'e (C) Anal.~Non Lineaire, 
\textbf{38} (2021), 507--547. 

\bibitem{Ri2017}
E.~Risler, 
{\em Competition between stable equilibria in reaction-diffusion systems: the influence of mobility on dominance}, (2017), arXiv:1703.02159. 

\bibitem{RM2001}
M.~Rodrigo and M.~Mimura, 
{\em Exact solutions of reaction-diffusion systems and nonlinear wave equations}, 
Jpn.~J.~Indust.~Appl.~Math, \textbf{18} (2001), 657--696. 

\bibitem{SK97}
N.~Shigesada and K.~Kawasaki, 
Biological Invasions: Theory and Practice 
(Oxford, 1997; online edn, Oxford Academic, 31 Oct. 2023), https://doi.org/10.1093/oso/9780198548522.001.0001.

\bibitem{VVV94}
A.~I.~Volpert, Vi.~A.~Volpert and Vl.~A.~Volpert, 
Traveling Wave Solutions of Parabolic Systems, 
Trans.~Math.~ Monogr., \textbf{140}, American Mathematical Society, 
Providence, RI, 1994.

\bibitem{WO21}
H.~Wang and C.~Ou, 
{\em Propagation Direction of the Traveling Wave for the Lotka-Volterra 
Competitive Lattice System}, 
J.~Dyn.~Differ.~Equ., {\bf 33} (2021), 1153--1174. 

\bibitem{WBV22}
Z.~Wang, A.~Bayliss and V.~A.~Volpert, 
{\em Asymptotic analysis of the bistable Lotka-Volterra competition-diffusion 
system}, Appl.~Math.~Comput., \textbf{432} (2022), 127371.

\bibitem{Xi2025} D.~Xiao, 
{\em Sufficient conditions for determining the sign of the wave speed in the 
Lotka-Volterra competition system}, 
J.~Differ.~Equ., \textbf{424} (2025), 208--228. 

\end{thebibliography}


\end{document}